\theoremstyle{definition}
\newtheorem{definition}{Definition}[section]
\theoremstyle{plain}
\newtheorem{theorem}{Theorem}[section]
\newtheorem{proposition}{Proposition}[section]
\newtheorem{corollary}{Corollary}[section]
\def\NN{{\mathbb N}} 
\def\PP{{\mathbb P}}
\def\RR{{\mathbb R}}
\def\II{{\mathds 1}}
\newcommand{\new}[1]{#1}
\title{Maximum Likelihood Estimation for Hawkes Processes with self-excitation or inhibition}
\author{Anna Bonnet, Miguel Martinez Herrera, Maxime Sangnier\footnote{Sorbonne Université, CNRS, LPSM, Paris, France}}
\begin{document}

\maketitle

\begin{abstract}

In this paper, we present a maximum likelihood method for estimating the parameters of a univariate Hawkes process with self-excitation or inhibition. Our work generalizes techniques and results that were restricted to the self-exciting scenario.
The proposed estimator is implemented for the classical exponential kernel and we show that, in the inhibition context, our procedure provides more accurate estimations than current alternative approaches.
\end{abstract}

\section{Introduction}

The Hawkes model is a point process observed on the real line, which generally corresponds to the time, where any previously encountered event has a direct influence on the chances of future events occurring. This past-dependent mathematical model was introduced in \cite{Hawkes1971} and its first application was to model earthquakes occurrences \cite{Ogata1988, Ogata1998}. Since then, Hawkes processes have been widely used in various fields, for instance finance \cite{Bacry2013}, social media \cite{Rizoiu2017, Mishra2016}, epidemiology \cite{Rizoiu2018}, sociology \cite{Linderman2014} and neuroscience \cite{Reynaud2014}.

The main advantage of Hawkes processes is their ability to model different kinds of relationships between phenomena through an unknown kernel or transfer function. The Hawkes model was originally introduced as a self-exciting point process where the appearance of an event increases the chances of another one triggering.
Several estimation procedures have been proposed for the kernel function, both in parametric \cite{Ogata1988, DaFonseca2013, Ozaki1979} and nonparametric \cite{Reynaud2014, Bacry2015} frameworks.

However, the inhibition setting, where the presence of an event decreases the chance of another occurring, has drawn less attention in the literature, although it can be of great interest in several fields, in particular in neuroscience \cite{Reynaud2018}. In this inhibition context, the cluster representation \cite{Hawkes1974} on which is based the construction of a self-exciting Hawkes process, is no longer valid.
While the existence and the construction of such nonlinear processes can be found in recent works for the univariate \cite{Costa2018} and multivariate \cite{Chen2017} cases,
statistical estimation of the kernel function has been hardly addressed.
A first approach consists in computing an approximation of the likelihood as if the intensity function could take negative values, and optimizing it to get a maximum likelihood estimator \cite{Lemonnier2014}.
\new{Alternatively,} the type of interaction (excitation or inhibition) can be considered as a hidden variable, giving rise to a very practical estimation method \cite{Eisner2017}.

In this paper, we propose a maximum likelihood procedure that can handle both excitation and inhibition scenarios for a univariate Hawkes process. Our approach is based on an explicit computation of the likelihood for any type of \new{monotone} kernel functions, which is facilitated by the introduction of the natural concept of restart points. The \new{latter} are the times when the intensity function, that can be null on some intervals, become strictly positive again. We show that these restart points have a closed-form expression when the kernel is exponential, which allows us to rewrite and maximize the likelihood without approximations that are proposed for instance in \cite{Lemonnier2014}.  Our estimator is implemented in Python (the code is freely available online\footnote{\url{https://github.com/migmtz/hawkes-inhibition-expon}}). We also propose a numerical study which shows the good performance of our exact estimation procedure compared to approximated approaches, especially when the intensity function is frequently equal to zero.

To outline the paper, besides a quick introduction to self-regulating Hawkes processes (\new{also referred to as self-correcting Hawkes processes or Hawkes processes with inhibition}), Section~\ref{sec:general} introduces the concepts of underlying intensity function and restart points.
General results concerning the compensator and the exact maximum likelihood estimation procedure are described in Section~\ref{sec:mle}.
At last, after a brief discussion about goodness-of-fit in Section~\ref{sec:goodness},
Section~\ref{sec:num_res} concludes with a numerical study of the estimation error.

\section{The Hawkes process}\label{sec:general}

Let $N$ be a point process on $\RR_+^*$, \new{where \(\RR_+^* = \left\{ x>0 : x \in \RR \right\}\),} and $(T_k)_{k\geq 1}$ its associated event times (with convention $T_0 = 0$).
For any $t \geq 0$, let us note $N(t) = \sum_{k\geq 1}{\II_{T_k \leq t }}$ the number of events in $[0,t]$
\new{(where \(\II_{\cdot}\) stands for the indicator function)},
and $\lambda$ its conditional intensity function \cite{DaleyV1}:
\begin{equation*}
	\lambda(t) = \lim_{h\to 0}{\frac{\PP(N(t+h) - N(t) > 0)}{h}}.
\end{equation*}

A univariate Hawkes process is a point process defined by the conditional intensity function:
\begin{align}
    \lambda(t) &= \left(\lambda_0 + \int_{\new{0}}^{t}{h(t-s)\,\mathrm{d}N(s)}\right)^+
    = \left(\lambda_0 + \sum_{T_k \leq t}{h(t-T_k)}\right)^+,
    \label{eq:general_hawkes}
\end{align}
where $x^+ = \max(0,x)$ denotes the positive part of any real value $x$,
$\lambda_0\in\RR_+^*$ is the baseline intensity and
\new{\(h : \RR_+ \to \RR\)} is the kernel, which is assumed to be a monotone measurable function with $\lim_{t\to\infty} h(t) = 0$.
The kernel function $h$ is the key component of a Hawkes process:
it translates the influence (generally assumed to fade away over time) of a past event over the process.
Here, $h$ is allowed to take negative values, meaning that it can model both self-exciting and self-regulating Hawkes processes.

Working with such Hawkes processes may prove to be difficult as the positive part function is non-linear.
In particular, while computing the compensator function \cite{DaleyV1}
\begin{equation}
	\Lambda(t) = \int_{0}^{t}{\lambda(t)\,\mathrm{d}t},
	\quad \forall t \geq 0,
    \label{eq:compensator}
\end{equation}
is very easy in the self-exciting case (by linearity of the intensity), it becomes more challenging for the self-regulating Hawkes process.
As it is the keystone to derive the likelihood function (and then to obtain a parametric estimation method), our first contribution is to provide an exact expression of the compensator.

For this purpose, let us first introduce the \emph{underlying intensity function} and the \emph{restart time}, two quantities which will allow us to
\new{derive the computation of the likelihood of a monotone Hawkes process, in a framework unifying self-correcting and self-exciting Hawkes processes.}

\begin{definition}
Let the \emph{underlying intensity function} of $N$ be:
\begin{equation*}
    \lambda^\star(t) = \lambda_0 + \int_{\new{0}}^{t}{h(t-s)\,\mathrm{d}N(s)}.
\end{equation*}

In addition, let the \emph{restart time} $T_k^\star$ be, for any positive integer $k$:
\begin{equation*}
    T_k^\star = \inf{\{t\geq T_k\mid \lambda(t) > 0\}},
\end{equation*}
along with its corresponding \emph{cooldown interval} $\tau_k^* = T_k^* - T_k$.
\label{def:underlying}
\end{definition}

\begin{figure}[ht]
  \centering
  \includegraphics[width=0.7\textwidth]{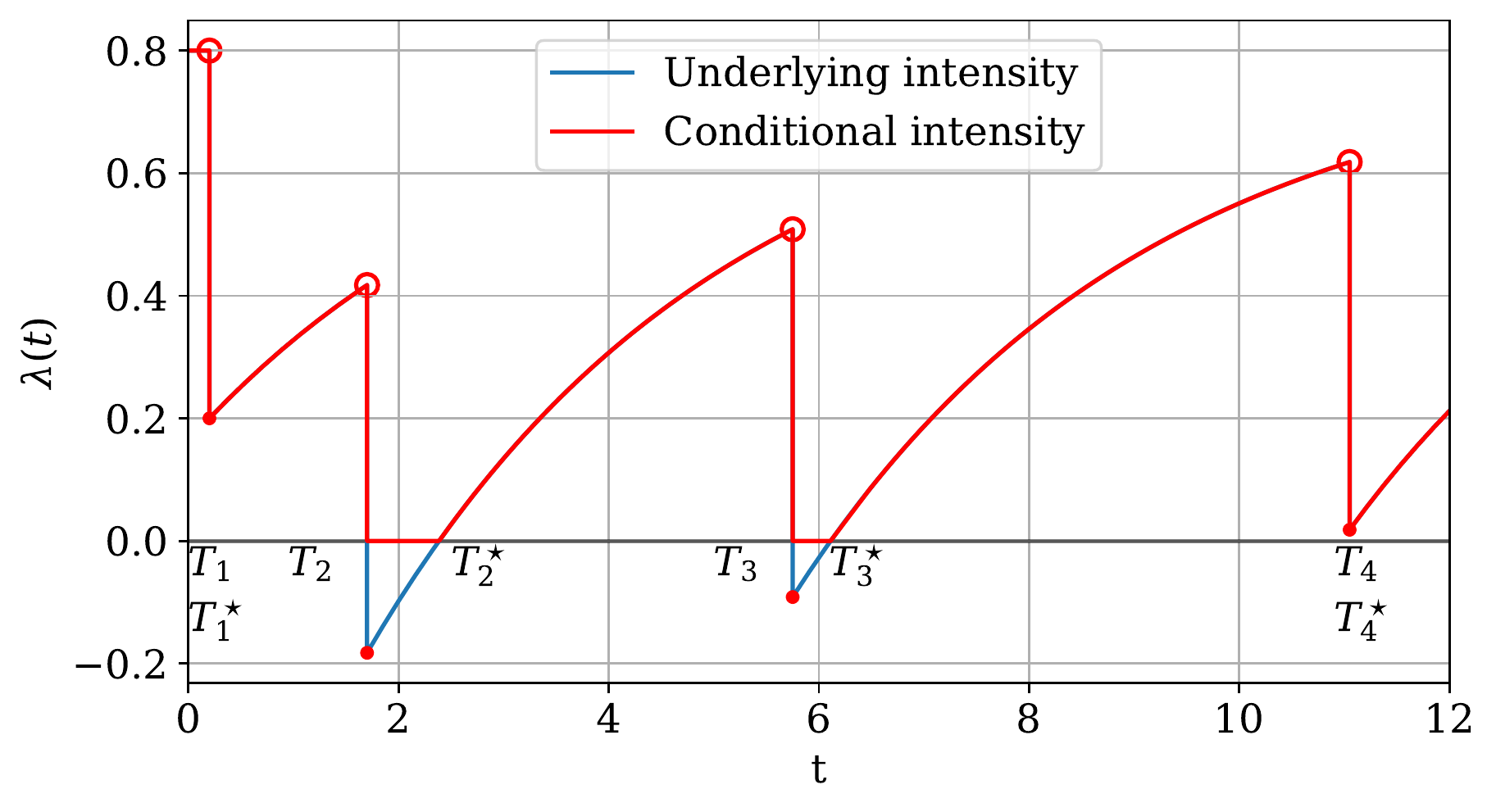}  
  \caption{Example of the intensity (red curve) and underlying intensity (blue curve) for a self-regulating Hawkes process, with the associated restart times. We only see the negative values of the blue curve since they precisely correspond to the values for which the two intensity functions are not equal.
  }
  \label{fig:underlying_intensity}
\end{figure}

As illustrated in Figure \ref{fig:underlying_intensity}, $\lambda^\star$ corresponds to the intensity $\lambda$ as if it were allowed to take negative values.
Moreover, as the kernel is assumed to be monotone, the restart time associated to one occurrence can be interpreted as the first moment after this occurrence from which $\lambda$ and $\lambda^\star$ become equal  (in particular, the restart time and the occurrence time coincide if the intensity function is nonnegative at this time, see Figure \ref{fig:underlying_intensity}):
\begin{equation*}
    T_k^\star = \inf{\{t\geq T_i\mid \forall t\in(T_k^*,T_{k+1}),\, \lambda(t) = \lambda^\star(t)\}}.
\end{equation*}

\section{Maximum likelihood estimation and the exponential model}
\label{sec:mle}

Assume a parametric model $\mathcal P = \left\{ \lambda_\theta, \theta \in \Theta \right\}$ for the conditional intensity function $\lambda$, where $\theta$ contains unknown quantities such as the baseline $\lambda_0$ and the kernel $h$.
Then, with convention $log(t) = -\infty$ for $t\leq0$, the log-likelihood $\ell_t$ of any $\theta \in \Theta$ with respect to the observations $T_1,\ldots, T_{N(t)}$ in the time interval $[0,t]$ is \cite[Proposition 7.2.III.]{DaleyV1}, \cite{Ozaki1979}:
\begin{equation}\label{eq:log_likelihood}
    \ell_t(\theta) = \sum_{k=1}^{N(t)}{\log{(\lambda_\theta(T_k^-)})} - \Lambda_\theta(t),
\end{equation}
where the compensator $\Lambda_\theta$ is defined as in Equation \eqref{eq:compensator} and $\lambda_\theta(T_k^-) = \lim_{t \to T_k^-} \lambda_\theta(t)$.

Equation~\eqref{eq:log_likelihood} reveals the importance of being able to compute the compensator $\Lambda$ (equivalently $\Lambda_\theta$) in order to provide a practical implementation of the maximum likelihood estimator of $\lambda$.
Thus, a first contribution of this paper lies in Proposition~\ref{proposition:integral_cooldown}, which establishes a decomposition of the compensator $\Lambda$ using the underlying intensity function $\lambda^\star$ and the restart times $T_{1}^\star,\ldots, T_{N(t)}^\star$.

\begin{proposition}\label{proposition:integral_cooldown}
For any $t>0$, the compensator $\Lambda$ can be expressed as:

\begin{equation}\label{eq:compensator_general}
\Lambda(t) =
\begin{dcases}
    \lambda_0 t &\text{\qquad if $t<T_1$}\\
    \lambda_0 T_1 + \sum_{k=1}^{{N(t)-1}}{\int_{T_{k}^\star}^{T_{k+1}}{\lambda^\star(u)\,\mathrm{d}u}} + \int_{T_{N(t)}^\star}^{t}{\lambda^\star(u)\,\mathrm{d}u} &\text{\qquad if $t\geq T_1$},
\end{dcases}
\end{equation}
with the conventions that the sum is equal to $0$ if ${N(t)} = 1$ and the last integral is equal to $0$ if $t < T_{N(t)}^*$.
\end{proposition}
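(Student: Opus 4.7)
The plan is to compute $\Lambda(t) = \int_0^t \lambda(u)\,du$ by splitting $[0,t]$ at the event times $T_1 < T_2 < \cdots < T_{N(t)}$ and handling each piece using the identity $\lambda = (\lambda^\star)^+$ together with the fact that, between two consecutive events, $\lambda^\star$ is monotone.

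First, the case $t < T_1$ is immediate: no event has been observed, so $\lambda^\star(u) = \lambda_0 > 0$ for $u \in [0,t]$, hence $\lambda(u) = \lambda_0$ and $\Lambda(t) = \lambda_0 t$. For $t \geq T_1$, I would write
\begin{equation*}
\Lambda(t) = \int_0^{T_1}\lambda(u)\,du + \sum_{k=1}^{N(t)-1}\int_{T_k}^{T_{k+1}}\lambda(u)\,du + \int_{T_{N(t)}}^{t}\lambda(u)\,du,
\end{equation*}
where the first term equals $\lambda_0 T_1$ by the same argument as before.

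The heart of the proof is then the following claim, which I would isolate as an auxiliary observation: for any inter-event interval $(T_k, T_{k+1})$ (and analogously for the final piece $(T_{N(t)},t)$),
\begin{equation*}
\int_{T_k}^{T_{k+1}}\lambda(u)\,du = \int_{T_k^\star}^{T_{k+1}}\lambda^\star(u)\,du,
\end{equation*}
with the convention that the right-hand side vanishes when $T_k^\star > T_{k+1}$. To prove this claim I would exploit the monotonicity of $h$: on $(T_k, T_{k+1})$ no new summand is added to $\lambda^\star(u) = \lambda_0 + \sum_{i=1}^{k} h(u-T_i)$, and each term $u \mapsto h(u-T_i)$ is monotone in the same direction as $h$, so $\lambda^\star$ itself is monotone on this interval. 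Three cases arise: (i) $\lambda^\star > 0$ throughout, whence $\lambda = \lambda^\star$ and $T_k^\star = T_k$; (ii) $\lambda^\star \leq 0$ throughout, whence $\lambda \equiv 0$ and $T_k^\star \geq T_{k+1}$, making both sides zero; (iii) $\lambda^\star$ crosses zero exactly once at a point $T_k^\star \in (T_k, T_{k+1})$, with $\lambda = 0$ to the left and $\lambda = \lambda^\star$ to the right of that point. In each case the identity holds.

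Summing these pieces over $k = 1,\dots, N(t)-1$ and adding the final contribution over $[T_{N(t)}, t]$ yields the announced decomposition. The main subtlety is not a deep difficulty but a careful bookkeeping step: ensuring that the case $T_k^\star > T_{k+1}$ (i.e.\ the underlying intensity has not yet recovered when the next event arrives) is handled consistently, which is precisely the role of the convention in the statement. Once monotonicity of $\lambda^\star$ on inter-event intervals is established, the remainder is additivity of the Lebesgue integral.
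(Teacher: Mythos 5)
Your proof is correct and follows essentially the same route as the paper: split $\Lambda(t)=\int_0^t\lambda(u)\,\mathrm{d}u$ at the event times and use the monotonicity of $h$ (hence of $\lambda^\star$ on each inter-event interval) to justify that $\lambda(u)=\lambda^\star(u)\II_{[T_k^\star,T_{k+1})}(u)$ on $[T_k,T_{k+1})$. Your three-case discussion is simply a more explicit version of the paper's one-line remark, and your handling of the convention for $T_k^\star>T_{k+1}$ matches the statement.
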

\begin{proof}
This comes directly from splitting the integral of $\Lambda(t) = \int_{0}^{t}{\lambda(t)\,\mathrm{d}t}$ on the intervals $[T_k, T_{k+1})$ ($k \in \{ 0, \dots, N(t)-1 \}$) and $[T_{N(t)}, t]$, and by remarking that, since $h$ is monotone, $\forall t\in[T_k, T_{k+1})$, $\lambda(t) = \lambda^\star(t)\II_{[T_k^*,T_{k+1})}(t)$.

\end{proof}

In order to give an explicit computation of the quantity $\int_{T_{k}^\star}^{T_{k+1}}{\lambda^\star(u)\,\mathrm{d}u}$ (equivalently $\int_{T_{N(t)}^\star}^{t}{\lambda^\star(u)\,\mathrm{d}u}$) which appears in Proposition~\ref{proposition:integral_cooldown}, we focus on the classical scenario where we consider an exponential kernel $h(t) = \alpha \mathrm{e}^{-\beta t}$, for some $\alpha\in\RR$ and $\beta\in\RR_+^*$.
Let us notice that $\alpha$ can be either positive or negative, meaning that the process may be either self-exciting or self-regulating.

Then, the underlying intensity function can be written as:
\begin{equation}\label{eq:exponential_intensity}
    \lambda^\star(t) = \lambda_0 + \int_{\new{0}}^{t}{\alpha \mathrm{e}^{-\beta(t-s)}\,\mathrm{d}N(s)}.
\end{equation}
The forthcoming proposition steps forward in computing the compensator for an exponential kernel.

\begin{proposition}[Compensator for exponential kernel]
Let $t > 0$ and $k \in \{1, \dots, N(t)\}$.
The restart times read:
\[
	T_k^\star = T_k + \beta^{-1} \log \left({\frac{\lambda_0 - \lambda^\star(T_k)}{\lambda_0}} \right) \new{\II_{\lambda^\star(T_k) < 0}},
\]
and the compensator is expressed as in Equation~\eqref{eq:compensator_general}, with,
for any $\tau \in [T_{k}^*, T_{k+1}]$:
\[
	\int_{T_{k}^\star}^{\tau}{\lambda^\star(u)\,\mathrm{d}u}
	= \lambda_0(\tau - T_{k}^\star) + \beta^{-1} (\lambda^\star(T_{k}) - \lambda_0) (\mathrm{e}^{-\beta(T_{k}^\star-T_{k})}-\mathrm{e}^{-\beta(\tau-T_{k})}).
\]

\label{prop:exp}
\end{proposition}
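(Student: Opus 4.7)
The plan is to exploit the semigroup property of the exponential kernel to get a closed-form expression for $\lambda^\star$ on each inter-event interval, then read off both the restart time and the integral by elementary calculations.

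First I would fix $k$ and an arbitrary $t \in [T_k, T_{k+1})$ and rewrite
\[
    \lambda^\star(t) = \lambda_0 + \sum_{j \le k} \alpha \mathrm{e}^{-\beta(t - T_j)}
    = \lambda_0 + \mathrm{e}^{-\beta(t-T_k)} \sum_{j \le k} \alpha \mathrm{e}^{-\beta(T_k - T_j)}
    = \lambda_0 + (\lambda^\star(T_k) - \lambda_0)\,\mathrm{e}^{-\beta(t - T_k)},
\]
the last equality by evaluating $\lambda^\star$ at $T_k$. This single identity is the engine of the whole proof: on $[T_k, T_{k+1})$, $\lambda^\star$ is a shifted exponential with limit $\lambda_0 > 0$.

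Next I would derive the restart time by a case analysis based on the sign of $\lambda^\star(T_k)$. If $\lambda^\star(T_k) \ge 0$, then by monotonicity of $u \mapsto \lambda_0 + (\lambda^\star(T_k) - \lambda_0)\mathrm{e}^{-\beta(u-T_k)}$ (increasing towards $\lambda_0$ when $\lambda^\star(T_k) < \lambda_0$, decreasing towards $\lambda_0$ when $\lambda^\star(T_k) \ge \lambda_0$) the function stays nonnegative on $[T_k, T_{k+1})$, so $T_k^\star = T_k$ and the indicator $\II_{\lambda^\star(T_k) < 0}$ correctly kills the logarithm. If $\lambda^\star(T_k) < 0$, the function is strictly increasing from a negative value towards $\lambda_0 > 0$ and hits zero exactly once; solving $\lambda_0 + (\lambda^\star(T_k) - \lambda_0)\mathrm{e}^{-\beta(t - T_k)} = 0$ yields
\[
    t - T_k = \beta^{-1} \log\!\left(\frac{\lambda_0 - \lambda^\star(T_k)}{\lambda_0}\right),
\]
which is the announced formula. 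A small point to address is that when $\lambda^\star(T_k) < 0$ this candidate restart could in principle exceed $T_{k+1}$; however, since by definition $T_k^\star = \inf\{t \ge T_k : \lambda(t) > 0\}$, one simply truncates at $T_{k+1}$, and the integral decomposition of Proposition~\ref{proposition:integral_cooldown} (with the convention that the empty integral is zero) absorbs this case. This case bookkeeping is likely the only delicate point.

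Finally, for the integral, I would substitute the closed-form expression of $\lambda^\star$ on $[T_k^\star, \tau] \subseteq [T_k, T_{k+1})$ and integrate term by term:
\[
    \int_{T_k^\star}^{\tau} \lambda^\star(u)\,\mathrm{d}u
    = \lambda_0 (\tau - T_k^\star) + (\lambda^\star(T_k) - \lambda_0) \int_{T_k^\star}^{\tau} \mathrm{e}^{-\beta(u-T_k)}\,\mathrm{d}u,
\]
and the remaining antiderivative immediately gives $\beta^{-1}(\lambda^\star(T_k) - \lambda_0)(\mathrm{e}^{-\beta(T_k^\star - T_k)} - \mathrm{e}^{-\beta(\tau - T_k)})$. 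Plugging this expression into the general decomposition in Equation~\eqref{eq:compensator_general} completes the proof. The only step requiring thought is the sign/monotonicity discussion leading to the indicator $\II_{\lambda^\star(T_k) < 0}$; everything else is direct computation using the exponential kernel's multiplicative property.
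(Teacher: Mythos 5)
Your proof is correct and follows essentially the same route as the paper: obtain the closed form $\lambda^\star(t) = \lambda_0 + (\lambda^\star(T_k) - \lambda_0)\mathrm{e}^{-\beta(t-T_k)}$ on each inter-event interval, do the sign case analysis for the restart time, and integrate term by term. The only cosmetic difference is that you derive the closed form by factoring the exponential sum directly, whereas the paper solves the linear ODE $(\lambda^\star)'(t) = -\beta(\lambda^\star(t)-\lambda_0)$ with left condition $\lambda^\star(T_k)$; both yield Equation~\eqref{eq:markov_expression} immediately.
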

\begin{proof}
The proof is in \ref{app:appendix}.
\end{proof}

\begin{corollary}[Log-likelihood for exponential kernel]
  Let
  \begin{equation}
    \mathcal P = \left\{ \lambda_\theta = \bar{\lambda_0} + \int_{\new{0}}^{t}{\bar \alpha \mathrm{e}^{-\bar \beta(t-s)}\,\mathrm{d}N(s)} : \theta=(\bar{\lambda_0}, \bar \alpha, \bar \beta) \in \Theta \right\},
    \label{equ:model}
  \end{equation}
  be a parametric exponential model for the conditional intensity function $\lambda$ with $\Theta = \RR_+^* \times \RR \times \RR_+^*$,
  along with the candidate compensator $\Lambda_\theta$, the underlying intensity function $\lambda_\theta^\star$ and the restart times $T_{\theta, 1}^\star,\ldots, T_{\theta, N(t)}^\star$ associated to $\lambda_\theta$ (see Equation~\eqref{eq:compensator} and Definition~\ref{def:underlying}). 

  For any $\theta=(\bar{\lambda_0}, \bar \alpha, \bar \beta) \in \Theta$, by denoting
  \[
    \Lambda_{\theta, k} =
    \bar \lambda_0(T_k - T_{\theta, k-1}^\star) + \bar \beta^{-1} (\lambda_\theta^\star(T_{k-1}) - \bar \lambda_0) (\mathrm{e}^{-\bar \beta(T_{\theta, k-1}^\star-T_{k-1})}-\mathrm{e}^{-\bar \beta(T_k-T_{k-1})}),
  \]
  the log-likelihood reads (with convention $\log(x) = -\infty$ for $x \leq 0$):
  \begin{align}
      \ell_t(\theta)
      =& \log{\bar{\lambda_0}}
      - \bar{\lambda_0} T_1
      + \sum_{k=2}^{N(t)} \left[ \log \left(\bar{\lambda_0} + (\lambda_\theta^\star(T_{k-1}) - \bar{\lambda_0}) \mathrm{e}^{-\bar \beta(T_{k}-T_{k-1})} \right) - \Lambda_{\theta, k} \right] \notag\\
      &- \left[ \bar{\lambda_0} (t - T_{\theta, N(t)}^\star) + \bar \beta^{-1} (\lambda_\theta^\star(T_{N(t)}) - \bar{\lambda_0}) \left(\mathrm{e}^{-\bar \beta(T_{\theta, N(t)}^\star-T_{N(t)})}-\mathrm{e}^{-\bar \beta(t-T_{N(t)})} \right) \right] \II_{t > T_{\theta, N(t)}^\star}
      \label{eq:likeli_exp}.
  \end{align}
  \label{cor:loglik}
\end{corollary}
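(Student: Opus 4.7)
The plan is to substitute the explicit formulas from Proposition~\ref{prop:exp} into the general log-likelihood identity \eqref{eq:log_likelihood}, treating the log-intensity sum and the compensator $\Lambda_\theta(t)$ separately.

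For the log-intensity sum $\sum_{k=1}^{N(t)} \log \lambda_\theta(T_k^-)$, I would first isolate the $k=1$ term: since no event has occurred before $T_1$, $\lambda_\theta^\star(T_1^-) = \bar\lambda_0$, which yields the contribution $\log \bar\lambda_0$. For $k \geq 2$, a direct factoring of $e^{-\bar\beta(u - T_{k-1})}$ in \eqref{eq:exponential_intensity} gives, on the interval $[T_{k-1}, T_k)$,
\[
\lambda_\theta^\star(u) = \bar\lambda_0 + (\lambda_\theta^\star(T_{k-1}) - \bar\lambda_0)\, e^{-\bar\beta(u - T_{k-1})},
\]
and taking the left-limit at $u = T_k$ produces exactly the expression appearing inside the logarithm of \eqref{eq:likeli_exp}. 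The convention $\log(x) = -\infty$ for $x \leq 0$ then lets me replace $\log \lambda_\theta(T_k^-)$ by $\log \lambda_\theta^\star(T_k^-)$ without handling the positive-part operation separately.

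For the compensator, Proposition~\ref{proposition:integral_cooldown} provides the decomposition of $\Lambda_\theta(t)$ into the constant $\bar\lambda_0 T_1$, the sum $\sum_{k=1}^{N(t)-1} \int_{T_{\theta,k}^\star}^{T_{k+1}} \lambda_\theta^\star(u)\, du$, and the terminal integral $\int_{T_{\theta,N(t)}^\star}^{t} \lambda_\theta^\star(u)\, du$. Each summand is precisely the formula of Proposition~\ref{prop:exp} evaluated at $\tau = T_{k+1}$, which after the re-indexing $k \mapsto k-1$ coincides with $\Lambda_{\theta, k}$ for $k = 2, \dots, N(t)$. The terminal integral, read off from the same formula at $\tau = t$, yields the bracketed expression of \eqref{eq:likeli_exp}, and the factor $\II_{t > T_{\theta, N(t)}^\star}$ encodes the convention from Proposition~\ref{proposition:integral_cooldown} that this integral vanishes when $t < T_{\theta, N(t)}^\star$.

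The argument is essentially bookkeeping: once the two ingredients above are assembled, summing them gives \eqref{eq:likeli_exp} directly. The main (mild) obstacle is purely indexical, namely aligning the decomposition $\sum_{k=1}^{N(t)-1}\int_{T_{\theta,k}^\star}^{T_{k+1}}$ with the corollary's summation over $k = 2, \dots, N(t)$, and verifying that the boundary behaviour at $k = N(t)$ is consistent with the convention of Proposition~\ref{proposition:integral_cooldown}. No new analytical ingredient beyond Propositions~\ref{proposition:integral_cooldown} and~\ref{prop:exp} is needed.
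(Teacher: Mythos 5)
Your proposal is correct and follows essentially the same route as the paper: the paper's (very terse) proof likewise combines the Markov-type expression $\lambda_\theta^\star(T_k^-) = \bar\lambda_0 + (\lambda_\theta^\star(T_{k-1}) - \bar\lambda_0)\mathrm{e}^{-\bar\beta(T_k - T_{k-1})}$ for the log terms with Propositions~\ref{proposition:integral_cooldown} and~\ref{prop:exp} for the compensator. Your explicit remark that the convention $\log(x) = -\infty$ for $x \le 0$ permits replacing $\log \lambda_\theta(T_k^-)$ by $\log \lambda_\theta^\star(T_k^-)$ is a welcome detail that the paper leaves implicit.
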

\begin{proof}
  By Equation~\eqref{eq:markov_expression} in the proof of Proposition~\ref{prop:exp},
  \begin{equation*}
  \lambda_\theta^\star(T_k^-) =
  \begin{dcases}
      \bar \lambda_0 &\text{\qquad if $k = 1$,}\\
      \bar \lambda_0 + (\lambda_\theta^\star(T_{k-1}) - \bar \lambda_0) \mathrm{e}^{-\bar \beta(T_{k}-T_{k-1})} &\text{\qquad if $k\geq2$.}
  \end{dcases}
\end{equation*}
Combining this expression with Propositions \ref{proposition:integral_cooldown} and \ref{prop:exp} leads to the result.
\end{proof}

Corollary~\ref{cor:loglik} exhibits that the log-likelihood for
self-regulating Hawkes processes with an exponential kernel can be evaluated in $O(N(t))$ operations (by computing iteratively the quantities $T_{\theta, k}^\star$ and $\Lambda_{\theta, k}$ appearing in the summation of Equation~\eqref{eq:likeli_exp}),
as already known
for self-exciting exponential Hawkes processes \cite[Chapter 4.2]{Laub2014}.
For other \new{monotone} kernels without the Markov property, evaluating the log-likelihood with the method proposed here requires $O(N(t)^2)$ operations, similarly to existing approaches for self-exciting Hawkes processes.

\section{Goodness-of-fit}
\label{sec:goodness}

Even though computing the compensator $\Lambda$ (equivalently $\Lambda_\theta$) was clearly motivated by maximum likelihood estimation, it turns out that it is of great benefit to assess goodness-of-fit, and in particular to check the validity of a maximum likelihood estimation.
This is possible thanks to the Time Change Theorem, a result originally stated for inhomogeneous Poisson processes.

\begin{theorem}[{\cite[Theorem 7.4.IV]{DaleyV1}}]\label{th:real_time_change}
  Assume that $\Lambda$ is continuous, monotone and $\Lambda(t)\xrightarrow[t\rightarrow \infty]{} \infty$ a.s.
  Then a.s., a sequence of event times $(U_k)_{k\ge 1}$ is a realization of $N$ if and only if $(\Lambda(U_k))_{k \ge 1}$ is a realization of a homogeneous Poisson process with unit intensity.
\end{theorem}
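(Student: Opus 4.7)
The plan is to use the classical martingale-based time change for counting processes. First I would define the time-changed process $\tilde N(t) = N(\Lambda^{-1}(t))$, which is well-defined because the hypotheses on $\Lambda$ (continuity, monotonicity, and almost sure divergence to $\infty$) ensure that $\Lambda^{-1}$ exists almost surely as a continuous nondecreasing function on $\RR_+$. The event times of $\tilde N$ are precisely $(\Lambda(U_k))_{k\geq 1}$, so the forward direction reduces to identifying $\tilde N$ as a unit-rate homogeneous Poisson process.

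To accomplish this, I would appeal to Watanabe's characterization: a simple counting process is a unit-rate Poisson process if and only if its $(\mathcal G_t)$-compensator equals $t$ for some filtration $(\mathcal G_t)$ to which it is adapted. Since $N$ has $(\mathcal F_t)$-compensator $\Lambda$, the process $M(t) = N(t) - \Lambda(t)$ is a local martingale with respect to the natural filtration $(\mathcal F_t)$ of $N$. The random times $\sigma_t = \Lambda^{-1}(t)$ form an increasing family of $(\mathcal F_t)$-stopping times, so by the optional sampling theorem for local martingales applied along this time change, $M(\sigma_t) = \tilde N(t) - t$ is a local martingale with respect to the time-changed filtration $\mathcal G_t = \mathcal F_{\sigma_t}$. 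This identifies $t$ as the compensator of $\tilde N$, and Watanabe's theorem concludes that $\tilde N$ is a unit-rate Poisson process.

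For the converse, I would run the same argument in the other direction: if $(V_k)_{k\geq 1}$ are the event times of a unit-rate Poisson process $\tilde N$, then setting $U_k = \Lambda^{-1}(V_k)$ defines a counting process $N(t) = \tilde N(\Lambda(t))$, and applying the time-change theorem with the deterministic time change $t \mapsto \Lambda(t)$ transforms the compensator $s$ of $\tilde N$ back into $\Lambda(t)$, so that $(U_k)$ realizes $N$.

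The main obstacle is Watanabe's characterization itself, whose standard proof proceeds by showing that the complex exponential $\exp\bigl(iu\,\tilde N(t) - (\mathrm e^{iu}-1)\,t\bigr)$ is a martingale, hence that the increments of $\tilde N$ are independent and Poisson-distributed. A secondary technical point is to verify that $\sigma_t$ is indeed an $(\mathcal F_t)$-stopping time and that the filtration $(\mathcal G_t)$ satisfies the usual conditions, so that the optional sampling argument applies; both follow from the assumed continuity and strict monotonicity of $\Lambda$ together with the standard completion of $(\mathcal F_t)$.
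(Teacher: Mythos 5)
The paper does not prove this statement at all: it is imported verbatim from Daley and Vere-Jones (Theorem 7.4.IV), so there is no internal argument to compare against. Your martingale route --- time-change $N$ by the generalized inverse $\sigma_t = \Lambda^{-1}(t)$, transport the local martingale $N - \Lambda$ through the stopping times $\sigma_t$ via optional sampling, and conclude by Watanabe's characterization that $\tilde N(t) = N(\sigma_t)$ is a unit-rate Poisson process --- is the classical proof of this result and is sound in outline. Two caveats are worth making explicit. First, the hypothesis is only that $\Lambda$ is continuous and monotone, not strictly increasing, so $\Lambda^{-1}$ must be the right-continuous generalized inverse; you then need the (easy but necessary) observation that $N$ a.s. places no points on intervals where $\Lambda$ is constant, since $\mathbb{E}[N(I)] = \mathbb{E}[\Lambda(I)] = 0$ there --- otherwise the map $(U_k) \mapsto (\Lambda(U_k))$ could collapse distinct event times and $\tilde N$ would fail to be simple. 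Second, your converse is lighter than it should be: transporting the compensator back through $t \mapsto \Lambda(t)$ shows that $N(t) = \tilde N(\Lambda(t))$ has compensator $\Lambda$, but to conclude that $(U_k)$ ``is a realization of $N$'' you must invoke the fact that a simple point process is determined in law by its compensator with respect to its internal history (the uniqueness half of the martingale characterization); this deserves a sentence rather than being implicit. In the present paper's setting these hypotheses are satisfied because $\lambda \ge 0$ makes $\Lambda$ nondecreasing and the positive baseline $\lambda_0 > 0$ together with the restart-time structure forces $\Lambda(t) \to \infty$, which is why the authors can apply the theorem directly to the self-regulating case.
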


\new{Let us note that we can find applications of Theorem~\ref{th:real_time_change} to self-exciting Hawkes processes in the literature \cite[Chapter 5]{Laub2014}.
Since for self-regulating Hawkes processes \(\Lambda\) is still monotone, this result can also be applied in our case.

To be more precise, let us consider \(\theta \in \Theta\) and the null hypothesis: ``$(U_k)_{k\ge 1}$ is a realization of an exponential Hawkes process with parameter \(\theta\)''.
This hypothesis can be tested by applying a Kolmogorov-Smirnov test between the empirical distribution of $\left( \Lambda_\theta(U_{k+1}) - \Lambda_\theta(U_k) \right)_{k \ge 1}$ and an exponential distribution with parameter \(1\).
This procedure is illustrated in Table~\ref{table:estim_pvalue}, Section~\ref{sec:num_res}.}

\section{Numerical Results}\label{sec:num_res}

This section is aimed at assessing the maximum likelihood estimation method for self-regulating Hawkes processes, based on the exact computation of the compensator \(\Lambda_\theta\) in the exponential model \eqref{equ:model} (Corollary~\ref{cor:loglik}).
This procedure is compared to the approximated maximum likelihood estimation proposed in \cite{Lemonnier2014}, which consists in approximating $\Lambda_\theta$ by:
\[
  \Lambda_\theta^{LM}(t) = \int_{0}^{t}{\lambda_\theta^\star(u)\,\mathrm{d}u}.
\]
\new{This optimization procedure is performed with the L-BFGS-B algorithm from the Scipy package (with $(1,0,1)$ as a starting guess and a bounds argument such that $\lambda_0 \geq 0, \alpha \in \RR, \beta\geq 0$).}
In other words, estimators are:
\[
  \new{
  \hat \theta \in \operatorname{arg\,max}_{\theta \in \Theta} ~
  \left\{ \ell_{T_{N_{max}}}(\theta)
  =
  \sum_{k=1}^{N_{max}}{\log{(\lambda_\theta(T_k^-)})} - \Lambda_\theta(T_{N_{max}}) \right\},
  }
\]
where
\new{\(N_{max}=200\)} is the total number of jumps
and \(\Lambda_\theta\) can be replaced by \(\Lambda_\theta^{LM}\) to obtain the approximated likelihood proposed in \cite{Lemonnier2014}.

The comparison between the exact and the approximated estimation procedure is based on simulated data sets coming from self-correcting Hawkes processes of the form \eqref{equ:model} with \new{6} different values of \(\theta = (\bar \lambda_0, \bar \alpha, \bar \beta) \in \Theta\) \new{(see Table~\ref{table:estim_pvalue})} which have been chosen in order to explore different scenarios, in particular depending on whether the intensity function is frequently null or not.
Observations are sets of time jumps generated with a sampling algorithm (see the algorithm in \ref{app:simulation} and Python implementation online), which is a particular case of Ogata's thinning simulation method \cite{Ogata1981} that can handle Hawkes processes with either self-excitation or inhibition.

\begin{figure}[ht]
  \centering
    \includegraphics[width=\textwidth]{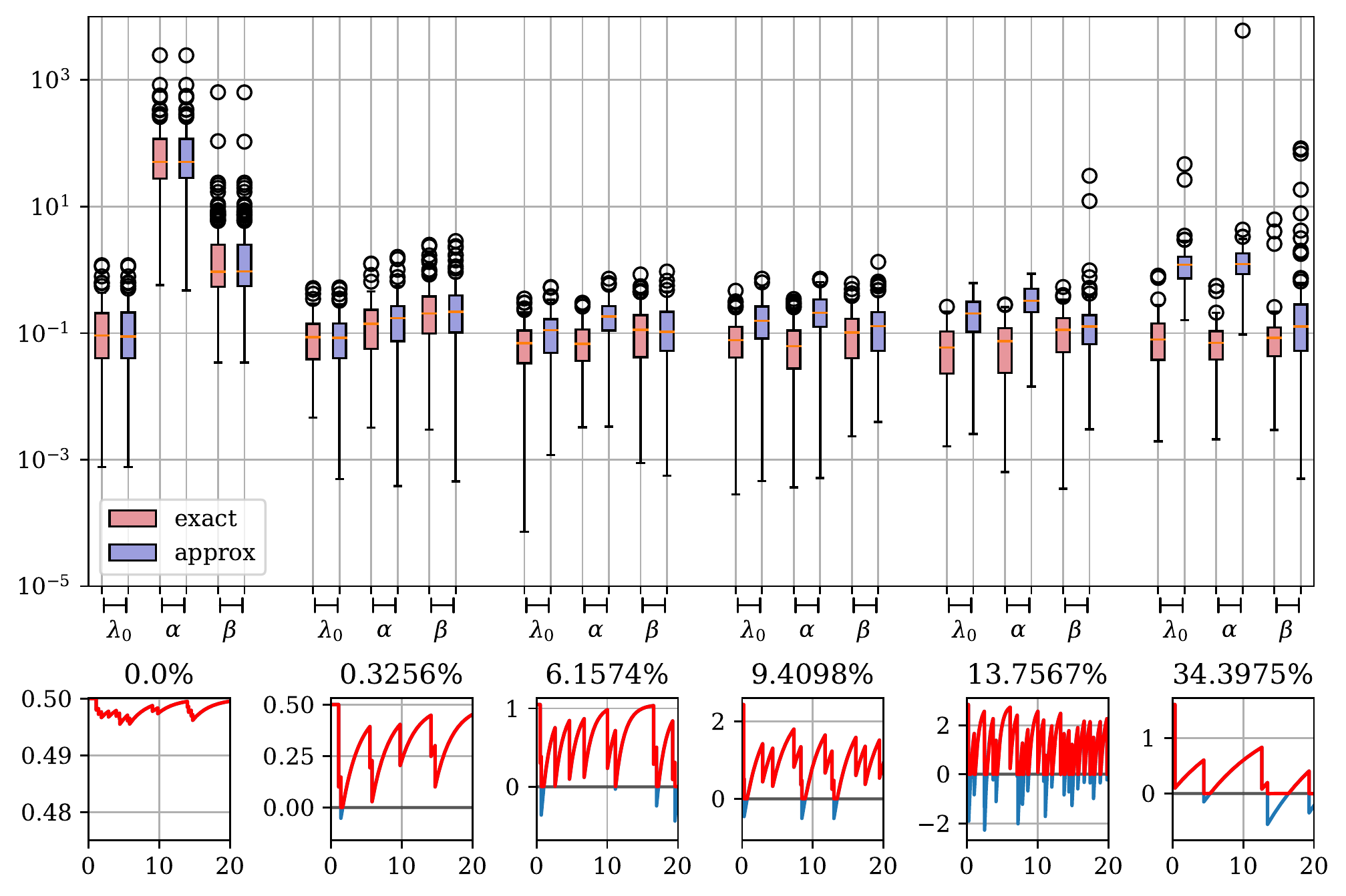}
  \caption{Top panel: \new{relative} absolute errors of estimations \(\hat \theta = (\hat \lambda_0, \hat \alpha, \hat \beta)\).
  Bottom panel: example of simulated intensities for each set of values $ \theta = ( \bar \lambda_0,  \bar \alpha,  \bar \beta)$  with the corresponding average percentage of time when the intensities are equal to zero.
  }
  \label{fig:boxplot_intensity}
\end{figure}

Figure~\ref{fig:boxplot_intensity} represents the \new{relative} absolute errors of estimations \(\hat \theta = (\hat \lambda_0, \hat \alpha, \hat \beta)\) for each of the \new{6} simulated models.
We observe that the exact approach provides more accurate estimations than the approximated procedure \new{(as illustrated in the boxplots of Figure~\ref{fig:boxplot_intensity} and by the \(p\)-values of the goodness-of-fit tests in Table~\ref{table:estim_pvalue})}. As expected, the more time the conditional intensity equals 0 (from left to right in Figure~\ref{fig:boxplot_intensity}), the greater the differences between the two procedures.
\new{Furthermore, the leftmost boxplot confirms that when the underlying intensity is nonnegative both methods are mostly identical.
Let us note that in this case the estimation of \(\bar \alpha\) is rather wrong (the estimation of \(\bar \beta\) is impacted consequently) probably because its value is close to \(0\) compared to the magnitude of \(\bar \lambda_0\).}

\begin{table}[!h]
\centering
\begin{tabular}{cccc|ccc|c}
\hline
	& \multicolumn{3}{c}{Parameters} & \multicolumn{3}{c}{Estimations} & \\
   & $\bar \lambda_0$ &  $\bar \alpha$ &  $\bar \beta$ & $\hat \lambda_0$ &  $\hat \alpha$ &  $\hat \beta$ &  p-value \\
\hline
       Exact&\multirow{2}{*}{0.5 } & \multirow{2}{*}{-0.001 } & \multirow{2}{*}{0.4} &       0.53 & 0.05 & 4.25 & 0.78\\
       Approx&		& 		&		& 0.54 & 0.05 & 4.23 & 0.78\\ \hline
       Exact&\multirow{2}{*}{0.5 } & \multirow{2}{*}{-0.2 } & \multirow{2}{*}{0.4 } &       0.52 & -0.21 & 0.42  & 0.72    \\
       Approx&		& 		&		&0.52 & -0.22 & 0.44 & 0.70\\ \hline
       Exact&\multirow{2}{*}{1.05 } & \multirow{2}{*}{-0.75 } & \multirow{2}{*}{0.8 } &       1.06 & -0.76 & 0.80 & 0.69  \\
       Approx&		& 		&		&1.14 & -0.88 & 0.82 & 0.55\\ \hline
       Exact&\multirow{2}{*}{2.43 } & \multirow{2}{*}{-0.98 } & \multirow{2}{*}{0.4} &       2.55 & -1.01 & 0.39 & 0.73  \\
       Approx&		& 		&		& 2.83 & -1.22 & 0.42 & 0.51 \\ \hline
       Exact&\multirow{2}{*}{2.85 } & \multirow{2}{*}{-2.5 } & \multirow{2}{*}{1.8} &       2.86 & -2.58 & 1.84 & 0.73\\
       Approx&		& 		&		&$8.44\times10^{3}$ & $-8.15\times10^{6}$ & 2.66  & 0.29\\ \hline
       Exact&\multirow{2}{*}{1.6 } & \multirow{2}{*}{-0.75 } & \multirow{2}{*}{0.1} &       1.61 & -0.75 & 0.11 & 0.70   \\
       Approx&		& 		&		&$1.36\times10^{7} $& $-1.15\times10^{10}$  & 0.37  & $5.12\times10^{-06}$\\
\hline
\end{tabular}
\caption{\new{
Quantitative assessment of the numerical study: sets of true parameters (left), average estimations over 100 repetitions (middle) and average \(p\)-values for the test of Section~\ref{sec:goodness}.
}}
\label{table:estim_pvalue}
\end{table}

\section{Discussion}

In this paper we proposed a maximum likelihood approach for Hawkes processes that can handle both self-exciting and self-regulating scenarios, the first case being already covered in the literature and the latter being our main contribution. For this purpose, we define the concepts of underlying intensity function and restart times when working with monotone kernel functions. In particular we obtain exact expressions of the compensator for the exponential Hawkes process which is the key step of the estimation procedure.
We present numerical results on synthetic data that show the efficiency of our procedure, with a substantial improvement compared to approximated approaches when the intensity function is frequently null.

From a theoretical point of view, future work will consist in adapting analytical results to study the convergence of our estimator in the self-regulating case. Regarding modeling, it would be of great interest to consider kernel functions outside the classical exponential scenario.
Another important step is the extension of our concepts and algorithms to the multivariate version of the process, which is not straightforward since in the multivariate setting the expression of the restart times are no longer explicit.
\new{This last point is essential in order to target real-world datasets since in many applications, being limited to the univariate case will lead to detect self-excitation. However, a model that accounts for potential inhibition effects is of great interest when considering interactions between events of different natures, which will typically be modeled by a multivariate process. This multidimensional extension is the object of a future work, with a further perspective to use our procedure in neuroscience applications in order to detect attraction and repulsion effects between neurons.}

\new{
\section{Acknowledgments}
We thank the Associate Editor and the referees for their valuable comments helping to improve greatly the overall quality of this letter.
}

\bibliography{bibliography}

\begin{thebibliography}{10}
\expandafter\ifx\csname url\endcsname\relax
  \def\url#1{\texttt{#1}}\fi
\expandafter\ifx\csname urlprefix\endcsname\relax\def\urlprefix{URL }\fi
\expandafter\ifx\csname href\endcsname\relax
  \def\href#1#2{#2} \def\path#1{#1}\fi

\bibitem{Hawkes1971}
A.~G. Hawkes, Spectra of some self-exciting and mutually exciting point
  processes, Biometrika 58~(1) (1971) 83--90.

\bibitem{Ogata1988}
Y.~Ogata, Statistical models for earthquake occurrences and residual analysis
  for point processes, Journal of the American Statistical Association 83
  (1988) 9--27.

\bibitem{Ogata1998}
Y.~Ogata, Space-time point-process models for earthquake occurrences, Annals of
  the Institute of Statistical Mathematics 50 (1998) 379--402.

\bibitem{Bacry2013}
E.~Bacry, S.~Delattre, M.~Hoffmann, J.~Muzy, Scaling limits for {H}awkes
  processes and application to financial statistics, Stochastic Processes and
  Applications 123 (2013) 2475--2499.

\bibitem{Rizoiu2017}
M.~Rizoiu, Y.~Lee, S.~Mishra, L.~Xie, A tutorial on {H}awkes processes for
  events in social media (2017).
\newblock \href {http://arxiv.org/abs/1708.06401} {\path{arXiv:1708.06401}}.

\bibitem{Mishra2016}
S.~Mishra, M.~Rizoiu, L.~Xie, Feature driven and point process approaches for
  popularity prediction (2016).
\newblock \href {http://arxiv.org/abs/1608.04862v2}
  {\path{arXiv:1608.04862v2}}.

\bibitem{Rizoiu2018}
M.~Rizoiu, S.~Mishra, Q.~Kong, M.~Carman, L.~Xie, {SIR}-{H}awkes: linking
  epidemic models and {H}awkes processes to model diffusions in finite
  populations, in: Proceedings of the 2018 World Wide Web Conference,
  International World Wide Web Conferences Steering Committee, 2018, pp.
  419--428.

\bibitem{Linderman2014}
S.~Linderman, R.~Adams, Discovering latent network structure in point process
  data, in: Proceedings of the 31st International Conference on Machine
  Learning, Proceedings of Machine Learning Research, 2014, pp. 1413--1421.

\bibitem{Reynaud2014}
P.~Reynaud-Bouret, V.~Rivoirard, F.~Grammont, C.~Tuleau-Malot, Goodness-of-fit
  tests and nonparametric adaptive estimation for spike train analysis, The
  Journal of Mathematical Neuroscience 4 (2014) 3.

\bibitem{DaFonseca2013}
J.~Da~Fonseca, R.~Zaatour, {H}awkes process: Fast calibration, application to
  trade clustering, and diffusive limit, Journal of Futures Markets 34 (2013)
  548--579.

\bibitem{Ozaki1979}
T.~Ozaki, Maximum likelihood estimation of {H}awkes' self-exciting point
  processes, Annals of the Institute of Statistical Mathematics 31 (1979)
  145–155.

\bibitem{Bacry2015}
E.~Bacry, J.~Muzy, Second order statistics characterization of {H}awkes
  processes and non-parametric estimation (2015).
\newblock \href {http://arxiv.org/abs/1401.0903v2} {\path{arXiv:1401.0903v2}}.

\bibitem{Reynaud2018}
P.~Reynaud-Bouret, R.~Lambert, C.~Tuleau-Malot, T.~Bessaih, V.~Rivoirard,
  Y.~Bouret, L.~Leresche, Reconstructing the functional connectivity of
  multiple spike trains using {H}awkes models, Journal of Neuroscience Methods
  297 (2018) 9--21.

\bibitem{Hawkes1974}
A.~G. {H}awkes, D.~Oakes, A cluster process representation of a self-exciting
  process, Journal of Applied Probability 11 (1974) 493–503.

\bibitem{Costa2018}
M.~Costa, C.~Graham, L.~Marsalle, V.~Tran, Renewal in {H}awkes processes with
  self-excitation and inhibition (2018).
\newblock \href {http://arxiv.org/abs/1801.04645v2}
  {\path{arXiv:1801.04645v2}}.

\bibitem{Chen2017}
S.~Chen, A.~Shojaie, E.~Shea-Brown, D.~Witten, The multivariate {H}awkes
  process in high dimensions: Beyond mutual excitation (2017).
\newblock \href {http://arxiv.org/abs/1707.04928v2}
  {\path{arXiv:1707.04928v2}}.

\bibitem{Lemonnier2014}
R.~Lemonnier, N.~Vayatis, Nonparametric markovian learning of triggering
  kernels for mutually exciting and mutually inhibiting multivariate {H}awkes
  processes, in: Machine Learning and Knowledge Discovery in Databases,
  Springer Berlin Heidelberg, 2014, p. 161–176.

\bibitem{Eisner2017}
H.~Mei, J.~Eisner, The neural {H}awkes process: A neurally self-modulating
  multivariate point process (2017).
\newblock \href {http://arxiv.org/abs/1612.09328v3}
  {\path{arXiv:1612.09328v3}}.

\bibitem{DaleyV1}
D.~J. Daley, D.~Vere-Jones, An introduction to the theory of point processes.
  {V}ol. {I}, 2nd Edition, Probability and its Applications (New York),
  Springer-Verlag, 2003.

\bibitem{Laub2014}
P.~Laub, {H}awkes processes: Simulation, estimation, and validation (2014).

\bibitem{Ogata1981}
Y.~Ogata, On {L}ewis' simulation method for point processes, IEEE Transactions
  on Information Theory 27 (1981) 23--30.

\end{thebibliography}

\pagebreak

\appendix

\section{Proof of Proposition~\ref{prop:exp}}
\label{app:appendix}

Let us begin by expressing the underlying intensity function between two event times.
First, \(\lambda^\star(t) = \lambda_0\) for \(t\in[0,T_1)\).
Then, for any $k\in\NN$, for all $t \in [T_k,T_{k+1})$, the underlying intensity is differentiable in \(t\) and
\begin{equation*}
    (\lambda^\star)'(t) = -\beta(\lambda^\star(t) - \lambda_0),
\end{equation*}
with the left condition: $\lambda_k^\star := \lambda^\star(T_k)$.
Solving this differential equation leads to
\begin{equation}\label{eq:markov_expression}
    \lambda^\star(t) = \lambda_0 + (\lambda_k^\star - \lambda_0)\mathrm{e}^{-\beta(t - T_k)}.
\end{equation}

Now, by definition of the restart time $T_k^\star = \inf{\{t\geq T_k\mid \lambda(t) > 0\}}$, we have that if $\lambda_k^\star \geq 0$, then $T_k^\star = T_k$. Otherwise, as $\lambda^\star$ is continuous on the interval $[T_k,T_{k+1})$, we obtain $T_k^\star$ by solving for $t$: \(\lambda^\star(t) = 0\).
Thus, by Equation~\eqref{eq:markov_expression}:
\begin{align*}
    \lambda^\star(T_k^\star) = 0
    \iff T_k^\star = T_k + \beta^{-1}\log{\left(\frac{\lambda_0 - \lambda^\star_k}{\lambda_0}\right)}.
\end{align*}
Gathering both situations, we obtain the first part of Proposition~\ref{prop:exp}: \(T_k^\star = T_k + \beta^{-1}\log{\left(\frac{\lambda_0 - \lambda^\star_k}{\lambda_0}\right)} \II_{\lambda_k^\star < 0}\).

Let now $k\in\{1,\ldots,N(t)\}$ and $\tau\in[T_{k}^\star,T_{k+1}]$.
By Equation~\eqref{eq:markov_expression},
\begin{align*}
    \int_{T_{k}^\star}^{\tau}{\lambda^\star(u)\,\mathrm{d}u}
    &= \int_{T_{k}^\star}^{\tau} \left( \lambda_0 + (\lambda_{k}^\star - \lambda_0)\mathrm{e}^{-\beta(u - T_{k})} \right)\,\mathrm{d}u\\
    &= \lambda_0(\tau - T_{k}^\star) + \beta^{-1}(\lambda_{k}^\star - \lambda_0)(\mathrm{e}^{-\beta(T_{k}^\star-T_{k})}-\mathrm{e}^{-\beta(\tau-T_{k})}),
\end{align*}
which is the second part of Proposition~\ref{prop:exp}.

\section{Simulation algorithm}
\label{app:simulation}

Algorithm~\ref{alg:simulation} builds upon Ogata's thinning simulation method \cite[Proposition 1]{Ogata1981} in order to handle Hawkes processes with either self-excitation or inhibition.

\begin{algorithm}[!h]
\SetAlgoLined
 \textbf{Input} Parameters $\lambda_0$, $h$ a monotone function, and a stopping criteria (end-time $T$ or maximal number of jumps $N_{max}$)\;
 \textbf{\underline{Initialization}} Initialize $\lambda_k =\lambda_0$, $t_k=0$\ and list of times $\mathcal{T} = \emptyset$\;
 \While{\textnormal{Stopping criteria not fulfilled}}{
 Set $\lambda_{max} = \max(\lambda_0,\lambda_k)$\;
 Generate candidate time $t_{cand} = t_k - \frac{\log(U_1)}{\lambda_{max}}$, $U_1\sim U([0,1])$\;
 Estimate intensity $\lambda_k = \lambda(t)$ using sequence of times $\mathcal{T}$\;
 Sample $U_2\sim U([0,1])$\;
 \If{$U_2 \leq \frac{\lambda_k}{\lambda_{max}}$}{
        Add $t$ to sequence of times $\mathcal{T}$\;
     }
 Set $t_k = t_{cand}$\;
 }
 \Return the sequence of jumps $\mathcal{T}$.
 \caption{Thinning algorithm for monotone Hawkes process.}
 \label{alg:simulation}
\end{algorithm}

\end{document}